\newtheorem{thm}{Theorem}
\newtheorem{lem}{Lemma}
\newtheorem{conj}{Conjecture}
\title{A Note on the Orderability of Dehn Fillings of the Manifold $v2503$}
\author{Konstantinos Varvarezos}
\begin{document}
\maketitle

\begin{abstract}
We show that Dehn filling on the manifold $v2503$ results in a non-orderable space for all rational slopes in the interval $(-\infty , -1)$.  This is consistent with the L-space conjecture, which predicts that all fillings will result in a non-orderable space for this manifold.
\end{abstract}

\section{Introduction}
This paper studies the orderability of a certain 3-manifold in view of an outstanding conjectured relationship between orderability and L-spaces.

A \textit{left-ordering} on a group $G$ is a total ordering $\prec$ on the elements of $G$ that is invariant under left-multiplication; that is, $g \prec h$ implies $fg \prec fh$ for all $f,g,h \in G$.  A group is said to be \textit{left-orderable} if it is nontrivial and admits a left ordering.  A 3-manifold $M$ is called \textit{orderable} if $\pi_1(M)$ is left-orderable.

If $M$ is a rational homology 3-sphere, then the rank of its Heegaard Floer homology  is greater than or equal to the order of its first (integral) homology group.  $M$ is called an \textit{L-space} if equality holds; that is, if $\mathrm{rk}\big(\widehat{HF}(M)\big) = \left|H_1(M;\mathbf{Z})\right|$.

This work is motivated by the following proposed connection between L-spaces and orderability, first conjectured by Boyer, Gordon, and Watson.

\begin{conj}[\cite{BGW}] \label{conj:LS}
An irreducible rational homology 3-sphere is an L-space if and only if
its fundamental group is not left-orderable.
\end{conj}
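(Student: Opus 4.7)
The plan is to split the biconditional into two implications and route each through the existence of a co-orientable taut foliation, thereby establishing the expected trichotomy: for an irreducible rational homology 3-sphere $M$, the three conditions (a) $M$ is not an L-space, (b) $\pi_1(M)$ is left-orderable, and (c) $M$ admits a co-orientable taut foliation are pairwise equivalent. Two of these equivalences are largely in hand in the literature, so I would first dispatch those and then concentrate the technical work on the remaining pair.

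First I would prove ``(c) implies not (a)'' using the contact-topological route of Ozsv\'ath--Szab\'o: a co-orientable taut foliation is approximated by a weakly fillable contact structure whose contact invariant is nonzero in $\widehat{HF}(M)$, forcing $\mathrm{rk}\bigl(\widehat{HF}(M)\bigr) > |H_1(M;\mathbf{Z})|$. Subsequent work of Bowden and of Kazez--Roberts removes regularity hypotheses on the foliation, so this direction is essentially complete. Next I would prove ``(c) implies (b)'' via the universal circle machinery of Thurston and Calegari--Dunfield, refined by Boyer--Hu: the foliation produces a nontrivial $\pi_1(M)$-action on $S^1$, which under the rational homology sphere hypothesis lifts to an action on $\mathbb{R}$, from which a left-invariant order on $\pi_1(M)$ is extracted.

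The core of the work lies in the two reverse implications, ``not (a) implies (c)'' and ``(b) implies (c)'', which require \emph{producing} a taut foliation from Heegaard Floer or purely group-theoretic input. For the Floer side, I would extend Gabai's sutured manifold hierarchy techniques, using the nontriviality of a Floer class (tracked via Juh\'asz's sutured Floer homology) to guarantee that the hierarchy can be completed into a taut foliation. For the order-theoretic side, I would attempt to build an essential lamination or branched surface carrying the dynamics of a $\pi_1(M)$-action on $\mathbb{R}$ arising from a left order, and then splay it open into a foliation; a promising intermediate target is to first produce a $\mathbb{R}$-covered or uniform foliation matching the cyclic structure of the order.

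The principal obstacle, and the reason this statement remains a conjecture, is exactly this constructive step. There is no known general mechanism to manufacture a co-orientable taut foliation on a closed $3$-manifold from an abstract left-ordering of its fundamental group, nor from the bare failure of the L-space condition outside special classes (notably graph manifolds, where the trichotomy has been established by Hanselman--J.~Rasmussen--S.~Rasmussen--Watson and Boyer--Clay). A full proof would have to establish a uniform functorial correspondence between order-theoretic or Floer-theoretic data and transverse foliated structures, and I expect any route to pass through substantial new input in either the theory of essential laminations or a geometric interpretation of $\widehat{HF}$.
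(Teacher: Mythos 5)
The statement you were asked to prove is Conjecture~1, the L-space conjecture of Boyer--Gordon--Watson. The paper does not prove it: it is quoted purely as motivation, and the paper's actual content is a partial non-orderability result for Dehn fillings of $v2503$ that is merely \emph{consistent} with the conjecture. So there is no proof in the paper to compare yours against, and your proposal --- as you yourself concede in the final paragraph --- is a research program rather than a proof. The two constructive implications on which your plan hinges, ``not an L-space implies the existence of a co-orientable taut foliation'' and ``left-orderable implies the existence of a co-orientable taut foliation,'' are open in general, and you supply no argument for them beyond a statement of intent. That is a genuine and, at present, unfillable gap.

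One further caution about the parts you describe as ``largely in hand.'' The implication from a co-orientable taut foliation to the failure of the L-space condition is indeed a theorem (Ozsv\'ath--Szab\'o, with the regularity hypotheses later removed by Bowden and by Kazez--Roberts). But the implication from a co-orientable taut foliation to left-orderability is not: the universal-circle action of Thurston and Calegari--Dunfield need not lift to an action on $\mathbf{R}$ --- for a rational homology sphere the Euler class of the action is torsion but not necessarily zero --- and the action need not be faithful, so no left order on $\pi_1(M)$ is directly extracted. This implication is itself part of the conjectural trichotomy and is known only under additional hypotheses (e.g., work of Boyer--Hu) or for special families such as graph manifolds. Consequently even the half of the biconditional you route through condition (c) is incomplete. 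The accurate summary is that the statement remains an open conjecture, verified for non-hyperbolic geometric manifolds, graph manifolds, and various other families, and the paper under review contributes evidence for one hyperbolic example rather than a proof.
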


In \cite{BGW}, this equivalence was shown to hold for all closed, connected, orientable,
geometric three-manifolds that are non-hyperbolic.

If $M$ is a rational homology solid torus, then a framing of the boundary $(\mu,\lambda)$  is called a \textit{homological framing} for $\partial M$ if $\lambda$ is (rationally) nullhomologous.  Given a framing on $\partial M$ and a reduced fraction $\frac{p}{q} \in \mathbf{Q} \cup \{\infty\}$, we denote the $\frac{p}{q}$ Dehn filling by $M \big(\frac{p}{q}\big)$.

Culler and Dunfield \cite{CD} have remarked that the cusped hyperbolic manifold $v2503$ has the property that every non-longitudinal Dehn filling is an L-space (the longitudinal filling is $S^1 \times S^2 \# \mathbf{RP}^3$).  Thus, if Conjecture \ref{conj:LS} holds, one would expect none of the Dehn fillings of $v2503$ to be orderable (the longitudinal filling is non-orderable as its fundamental group has torsion).  To that end, we prove the following partial result:

\begin{thm}\label{thm:neg}
Let $M = v2503$.  Then for a certain homological framing, $M(r)$ is not orderable for any rational slope $r \in (-\infty, -1)$.
\end{thm}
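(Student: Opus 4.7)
The plan is to rule out left-orderability of $\pi_1(M(r))$ directly, by fixing a presentation of $\pi_1(M)$ together with the peripheral elements in the prescribed homological framing and then deriving a contradiction from any hypothetical left-ordering on the quotient. The first step is to obtain such a presentation, for instance via SnapPy, and to write the meridian $\mu$ and longitude $\lambda$ as explicit words in the generators; this fixes the meaning of the slope $r = p/q$ and the relator $\mu^p\lambda^q$ that is killed by the filling.

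Next, assuming $\prec$ is a left-ordering on $\pi_1(M(r))$, I would carry out a sign analysis on a small number of key elements. After possibly replacing $\prec$ by its reverse, we may take $1 \prec \mu$. Writing $r = p/q$ in lowest terms with $q > 0$, the condition $r < -1$ gives $p < -q < 0$, and the filling relation rearranges to $\lambda^q = \mu^{|p|}$ with $|p|>q$. Since $\mu^{|p|}\succ 1$, and since in any left-ordering a positive power of $g$ is positive only when $g$ itself is positive, we deduce $1\prec\lambda$ as well. These positivity facts, together with the defining relators of $\pi_1(M)$, should allow one to chase inequalities through a finite list of cases keyed on the signs of the remaining generators (or of suitably chosen words in them).

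Finally, in each case the aim is to exhibit an element of $\pi_1(M)$ that the group relations force to equal the identity but that the ordering forces to be strictly positive (or strictly negative), producing a contradiction. The main obstacle is twofold: first, selecting a presentation and a set of comparison elements for which the case analysis actually closes up; and second, ensuring that the chain of inequalities invoked depends only on the qualitative constraints $p<-q<0$ coming from $r \in (-\infty,-1)$ rather than on the specific rational values of $p$ and $q$, so that a single uniform argument handles the entire interval. In practice this requires some experimentation with the form of the relators, and the success of the method hinges on whether the presentation of $\pi_1(v2503)$ is rigid enough to preclude a consistent assignment of signs for every $r$ in this range.
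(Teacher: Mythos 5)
Your outline identifies the right general flavor of argument (fix the SnapPy presentation, pin down $\mu$ and $\lambda$ in the homological framing, do a sign analysis), but as written it has a genuine gap: the entire technical core is deferred. You say the positivity facts ``should allow one to chase inequalities through a finite list of cases'' and that success ``hinges on whether the presentation of $\pi_1(v2503)$ is rigid enough,'' but you never produce the case analysis, the auxiliary words, or the contradictions. That case-chase \emph{is} the proof; the paper carries it out by introducing $x=b^{-2}a$ and $y=ba^2$, rewriting the relator and the peripheral elements in terms of them (e.g.\ $\mu=x^2b^{-1}$, $\lambda=y^{-2}b^2$, $\mu^{-1}\lambda=bay^2x^{-1}$), and running four cases on the signs of $a$ and $b$.

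There is also a structural problem with where you run the argument. You work directly in $\pi_1(M(r))$ and try to contradict the single relation $\lambda^q=\mu^{|p|}$ with $|p|>q$; to close this up uniformly in $r$ you would need an ordering statement comparing $\mu$ and $\lambda$ strong enough to rule out $\lambda^q=\mu^{|p|}$ for \emph{every} such pair, and you give no mechanism for extracting that from the relators. The paper avoids this entirely: it proves one implication about left-orderings of the \emph{unfilled} group $\pi_1(M)$ --- namely $\mu^{-1}\lambda\prec 1\Rightarrow\mu^{-n}\lambda\prec 1$ for all $n\ge 1$ (Lemma \ref{lem:powers}) --- and then invokes the Clay--Watson interval criterion (Theorem \ref{thm:cw}) to convert that single implication into non-orderability of $M(r)$ for all $r\in(-n,-1)$. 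Your proposal never invokes such a criterion, and without it the ``uniformity in $r$'' obstacle you yourself flag is not resolved. A smaller but real omission: your ``after possibly reversing the order, $1\prec\mu$'' step silently assumes $\mu\neq 1$ in $\pi_1(M(r))$; the degenerate case where the peripheral subgroup dies in the quotient must be handled separately, which the paper does by computing that the resulting quotient group is $\mathbf{Z}/10\mathbf{Z}$, hence finite and not left-orderable (Lemma \ref{lem:nontrivial}).
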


\subsection*{Acknowledgements}
The author would like to thank Professors Zolt\'{a}n Szab\'{o} and Peter Ozsv\'{a}th for suggesting this problem as well as for providing feedback on drafts of this paper.

\section{Ordering}
We note the following useful facts, which hold for any left-ordered group $(G,\prec)$:
\begin{itemize}
\item For each $g \in G$, $1 \prec g \Leftrightarrow g^{-1} \prec 1$
\item For all $a,b \in G$, $1 \prec a,b \Rightarrow 1 \prec ab$ and similarly $a,b \prec 1 \Rightarrow ab \prec 1$.
\end{itemize}

We also call any element $g$ of $G$ \textit{positive} whenever $1 \prec g$, and similarly, $g$ is said to be \textit{negative} if $g \prec 1$.

Let $M$ be a compact, connected, oriented irreducible 3-manifold with icompressible torus boundary, and let $(\mu, \lambda)$ be a framing for $\partial M$. In \cite{CW}, Clay and Watson describe a criterion for obstructing left-orderability of Dehn fillings of $M$. One corollary of that criterion is:
\begin{thm}[\cite{CW}]\label{thm:cw}
Let $\frac{p}{q},\frac{p_0}{q_0},\frac{p_1}{q_1}$ be rational numbers satisfying $\frac{p}{q} \in \big(\frac{p_0}{q_0},\frac{p_1}{q_1}\big)$ such that $q, q_0, q_1 > 0$ and $p, p_0, p_1 < 0$.  Suppose that $\pi_1(\partial M)$ is not sent to 1 by the quotient map $\pi_1(M) \rightarrow \pi_1\big(M\big(\frac{p}{q}\big)\big)$ and that for each left ordering $\prec$ of $\pi_1(M)$, $\mu^{p_0}\lambda^{q_0} \prec 1$ implies $\mu^{p_1}\lambda^{q_1} \prec 1$.  Then $\pi_1\big(M\big(\frac{p}{q}\big)\big)$ is not left-orderable.
\end{thm}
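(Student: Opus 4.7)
The strategy is to apply the Clay--Watson criterion (Theorem \ref{thm:cw}) with the endpoints $p_0/q_0 = -n$ and $p_1/q_1 = -1$ for each integer $n \geq 2$. Since every rational $r \in (-\infty,-1)$ lies in some interval $(-n,-1)$, this family of applications covers all slopes in the statement, provided that for every such $n$ and every left ordering $\prec$ on $\pi_1(M)$ we can verify the implication
\[
\mu^{-n}\lambda \prec 1 \;\Longrightarrow\; \mu^{-1}\lambda \prec 1,
\]
together with the non-triviality of the image of $\pi_1(\partial M)$ in $\pi_1(M(r))$ for $r \in (-\infty,-1)$.

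The first step is to extract a finite presentation of $\pi_1(v2503)$ and of its peripheral subgroup from the SnapPy census, compute $H_1(M;\mathbf{Z})$, and fix a meridian--longitude pair $(\mu,\lambda)$ with $\lambda$ rationally nullhomologous. This pins down the homological framing required by Theorem \ref{thm:cw}. The peripheral non-triviality condition is a routine verification at the level of the presentation, using that $M$ is hyperbolic and that the longitudinal slope is excluded from our interval.

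The heart of the argument is the ordering implication. Given a left ordering $\prec$ on $\pi_1(M)$, I would first exploit the defining relators of the presentation to restrict the possible sign patterns of the generators: using that a product of strictly positive (resp.\ strictly negative) elements cannot equal the identity, each relator rules out certain assignments of signs, leaving only finitely many cases. Within each surviving case, assuming $\mu^{-n}\lambda \prec 1$, I would rewrite $\mu^{-1}\lambda$ as a product of conjugates of $\mu^{-n}\lambda$ and of elements whose signs are forced by the pattern, then conclude $\mu^{-1}\lambda \prec 1$ from the ``products of negatives are negative'' rule of Section~2.

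I expect the main technical obstacle to lie in this last combinatorial step, particularly in the case when $\mu$ and $\lambda$ have opposite signs under $\prec$: there the exponent $n$ interacts non-trivially with the sign of $\mu^{-n}\lambda$, so a uniform-in-$n$ rewriting must be engineered using the group relations, essentially by telescoping a single relator to pass from $\mu^{-n}\lambda \prec 1$ down to $\mu^{-1}\lambda \prec 1$. Once the implication is established, Theorem \ref{thm:cw} yields non-orderability of $\pi_1(M(r))$ for every $r \in (-n,-1)$, and taking the union over $n \geq 2$ completes the proof.
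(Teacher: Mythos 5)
Your proposal does not prove the statement it was meant to prove. The statement in question is Theorem~\ref{thm:cw} itself --- the Clay--Watson filling criterion (with negative $p$'s) --- but what you have outlined is a proof of the paper's main result, Theorem~\ref{thm:neg}, \emph{using} Theorem~\ref{thm:cw} as a black box. Everything in your sketch (choosing endpoints $-n$ and $-1$, extracting the SnapPy presentation of $\pi_1(v2503)$, the sign-pattern case analysis on the generators) belongs to the application of the criterion to $v2503$, not to the criterion itself, which is a general statement about any compact, connected, oriented, irreducible $M$ with incompressible torus boundary and makes no reference to $v2503$. As written, the argument is circular with respect to the assigned statement: you assume exactly what you were asked to establish.

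What is actually needed is much shorter and of a different character. The paper treats Theorem~\ref{thm:cw} as essentially Corollary~2.2 of \cite{CW}, which is stated there under the hypothesis $p, p_0, p_1 > 0$; the only work is to handle the sign change. One either observes that the Clay--Watson proof goes through verbatim when all three numerators are negative, or --- more cleanly --- substitutes $\mu' = \mu^{-1}$, notes that $(\mu', \lambda)$ still generates $\pi_1(\partial M)$ (which is the only property of the framing their argument uses), rewrites $\mu^{p_i}\lambda^{q_i} = (\mu')^{-p_i}\lambda^{q_i}$ with $-p_i > 0$, checks that the slope $\frac{p}{q}$ in the $(\mu,\lambda)$ framing becomes $\frac{-p}{q}$ in the $(\mu',\lambda)$ framing with the containment $\frac{-p}{q} \in \big(\frac{-p_0}{q_0}, \frac{-p_1}{q_1}\big)$ preserved up to reordering the endpoints, and then invokes their corollary directly. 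None of this appears in your proposal.
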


\begin{proof}[Remark]
This is essentially Corollary 2.2 in \cite{CW} except in that paper, $p, p_0, p_1$ are all required to be positive; however, their proof works just as well assuming they are all negative instead.  Alternatively, one can simply replace $\mu$ with $\mu^{-1}$ and apply their theorem directly, noting that the only necessary property of $\mu$ and $\lambda$ is that they generate $\pi_1(\partial M)$.
\end{proof}

\section{The Manifold $v2503$}
Now let us turn our attention to the manifold named $v2503$ in the SnapPy census \cite{SnapPy}, which we denote $M$ for the rest of this section.  It is also known as $M7_{2459}$ in the nomenclature of \cite{CHW}.  $M$ is a hyperbolic 3-manifold with one toroidal cusp, and $M$ is also a rational homology solid torus.  Indeed, SnapPy gives that $H_1(M;\mathbf{Z})\cong \mathbf{Z}\oplus \mathbf{Z}/10\mathbf{Z}$.
\subsection*{Fundamental Group}
According to SnapPy, the fundamental group of $M = v2503$ has the following presentation:
\begin{equation}\label{pi1}
\pi_1(M) = \left\langle a,b \vert a^2 b^{-2} a b^{-2} a^2 b a^2 b a b a^2 b = 1 \right\rangle
\end{equation}
In addition, SnapPy also gives that the ``meridian" $m$ and ``longitude" $l$ are:
\begin{align*}
m &= b^{-1} a^2 b a^2 \\
l &= b^{-2} a b^{-2} a b^{-1}
\end{align*}
We follow the convention of Culler and Dunfield \cite{CD} for the   homological framing.  In particular, our homological meridan $\mu$ and homological hongitude $\lambda$ correspond to $(0,1)$ and $(-1,0)$ respectively in SnapPy's framing . That is:
\begin{align}
\mu &= l = b^{-2} a b^{-2} a b^{-1} \label{mer} \\
\lambda &= m^{-1} = a^{-2} b^{-1} a^{-2} b \label{lon}
\end{align}

Notice that, by considering the abelianisation of (\ref{pi1}), the generator $a$ corresponds to a generator of the torsion subgroup of $H_1(M;\mathbf{Z})$, whereas $b$ is a free generator.  Moreover, $[\mu] = [a]^2[b]^{-5} \in H_1(M;\mathbf{Z})$ and $[\lambda] = [a]^{-4}$, and so $\lambda$ is rationally nullhomologous, which is consistent with its being a homological longitude.


For convenience, let us put:
\begin{align}
x &= b^{-2} a \label{x} \\
y &= b a^2 \label{y}
\end{align}
We record for later the following:
\begin{align}
a^2 x^2 a b a^2 b a b a^2 b &= 1 \label{pos}\\
\mu &= x^2 b^{-1} \label{mer2}\\
\lambda &= y^{-2} b^2 \label{lon2}\\
\lambda &= b a b a^2 b a^2 b^{-2} a b^{-1} = b a y^2 x b^{-1} \label{altl} \\
\mu^{-1} \lambda = \lambda \mu^{-1} &= b a y^2 x^{-1} \label{m-1l}\\
 &= y a^{-1} y^2 x^{-1} \tag{\ensuremath{10'}} \label{m-1l2} 
\end{align}

Apart from (\ref{altl}), these are straightforward consequences of (\ref{pi1})--(\ref{y}).  To see why (\ref{altl}) holds, observe that the group relation in (\ref{pi1}) can be rewritten as: 
\begin{align*}
1 &= a^2 b^{-2} a b^{-1} \left(b^{-1} a^2 b a^2 \right) b a b a^2 b \\
&= a^2 b^{-2} a b^{-1} \left(a^{-2} b^{-1} a^{-2} b\right)^{-1} b a b a^2 b\\
&=  a^2 b^{-2} a b^{-1} \lambda^{-1} b a b a^2 b
\end{align*}
where (\ref{lon}) was used in the last step to substitute for $\lambda$.  Now the desired expression follows by isolating $\lambda$ in the equation above.

\subsection*{Orderability constraints for $v2503$}
We now use the information about the fundamental group of $v2503$ to prove the following observations, which are the basic ingredients for the proof of the main theorem.
\begin{lem}\label{lem:powers}
Let $\prec$ be a left ordering of $\pi_1(v2503)$.  If $\mu^{-1} \lambda \prec 1$ then $\mu^{-n} \lambda \prec 1$ for all $n \geq 1$.
\end{lem}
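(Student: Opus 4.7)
My plan is to induct on $n$, with the base case $n=1$ given by the hypothesis. For the inductive step, the commutativity $\mu\lambda=\lambda\mu$ recorded in (\ref{m-1l}) lets me decompose
\[
\mu^{-(n+1)}\lambda \;=\; (\mu^{-1}\lambda)\cdot\mu^{-n},
\]
so it suffices to show $\mu^{-n}\prec 1$, the first factor being $\prec 1$ by hypothesis. Since $n$ ranges over all positive integers, this reduces to the auxiliary claim that $\mu\succ 1$ whenever $\mu^{-1}\lambda\prec 1$, because then $\mu^n\succ 1$ for every $n\geq 1$ by iterated left-multiplication.

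To establish the auxiliary claim I argue by contradiction: suppose $\mu\prec 1$ (we cannot have $\mu=1$, since the peripheral torus injects into $\pi_1(M)$). Using (\ref{mer2}) and (\ref{m-1l}), the two assumptions rewrite as $x^2\prec b$ and $bay^2\prec x$. Invoking the relation (\ref{pos}) in the equivalent form $(ayb)^2=x^{-2}a^{-2}$, one can verify that $aybayb$ is the central ten-letter substring of the word $ay^2bay^2$, yielding the identity
\[
ay^2bay^2 \;=\; aba\cdot aybayb\cdot a^2 \;=\; abax^{-2}.
\]
By combining suitable left-multiplications of the two inequalities I would aim to produce $ay^2bay^2\prec 1$, and then the identity above gives $abax^{-2}\prec 1$, i.e., $aba\prec x^2\prec b$. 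Iterating this squaring-and-substitution procedure, applying the relator (\ref{pi1}) only by left-multiplication (since right-multiplication need not preserve the order), should eventually yield an inequality that is impossible in any left-ordered group, furnishing the desired contradiction.

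The main obstacle is executing that final step cleanly: accumulating enough inequalities from the few given ones, using only left-multiplications and the single defining relator of $\pi_1(v2503)$, to exhibit an explicit element forced to be simultaneously $\prec 1$ and $\succ 1$. Once the auxiliary claim $\mu\succ 1$ is in hand, the inductive step closes immediately, since $\mu^{-(n+1)}\lambda$ then factors as a product of two elements both $\prec 1$, and is therefore $\prec 1$.
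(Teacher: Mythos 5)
Your reduction is the right one: the whole content of the lemma is the auxiliary claim that $\mu^{-1}\lambda \prec 1$ forces $1 \prec \mu$, after which $\mu^{-n}\lambda = \mu^{-(n-1)}\cdot(\mu^{-1}\lambda)$ is a product of negative elements and the conclusion is immediate. (Your factorization $\mu^{-(n+1)}\lambda = (\mu^{-1}\lambda)\mu^{-n}$ is legitimate since $\mu$ and $\lambda$ commute in the peripheral subgroup, as recorded in (\ref{m-1l}).) But you have not proved the auxiliary claim. The contradiction argument starting from $\mu \prec 1$ is left as a program --- ``I would aim to produce $ay^2bay^2 \prec 1$,'' ``iterating \dots should eventually yield an inequality that is impossible'' --- and you yourself flag that executing this is the main obstacle. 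That obstacle is precisely the mathematical content of the lemma, so as written the proof has a genuine gap, not merely a rough edge. (Your word identities do check out: $(ayb)^2 = x^{-2}a^{-2}$ follows from (\ref{pos}) and $ay^2bay^2 = abax^{-2}$ is a correct computation; but no inequality of the form $ay^2bay^2\prec 1$ is ever derived from the hypotheses, and the ``iteration'' has no termination argument.)

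The paper closes this step by a different and much more tractable device: a four-way case split on the signs of the generators $a$ and $b$ in the given ordering. In two of the cases ($b\prec 1\prec a$, and $a,b\prec 1$) one reads off $1\prec\mu$ directly from the word expressions (\ref{mer}) and (\ref{mer2}) (in the second case first deducing $1\prec x$ from the relator (\ref{pos}), since otherwise $1$ would be a product of negatives). The other two cases ($1\prec a,b$, and $a\prec 1\prec b$) are shown to be incompatible with the hypothesis $\mu^{-1}\lambda\prec 1$, using (\ref{m-1l}), (\ref{m-1l2}), and (\ref{lon2}). If you want to salvage your approach, replacing the open-ended ``accumulate inequalities by left-multiplication'' step with this sign analysis on $a$ and $b$ is the way to do it; each case then requires only reading off whether a fixed word is a product of positive or of negative letters.
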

\begin{proof}
Suppose that $\mu^{-1} \lambda \prec 1$. There are four cases, depending on the signs of the generators $a$ and $b$.

Case I: $b \prec 1 \prec a$.  In this case, $1 \prec \mu$ since, by (\ref{mer}), $\mu$ can be expressed as the product of positive terms.  Hence, $\mu^{-1} \prec 1$ and so for each $n\geq1$, $\mu^{-n} \lambda \prec 1$ as it is  the product of negative terms.

Case II: $a,b \prec 1$.  Notice that, by (\ref{pos}), it must hold that $1 \prec x$ for otherwise, 1 would be expressed as the product of negative terms.  Now by (\ref{mer2}), we see that $mu$ is the product of positive terms, and hence $1 \prec \mu$.  As in Case I, we once again have $\mu^{-n} \lambda \prec 1$ for all $n\geq1$.

Case III: $1 \prec a,b$.  In this case, we see from (\ref{y}) that $1 \prec y$ as $y$ is the product of positive terms.  On the other hand, we have that $x \prec 1$ for otherwise, 1 would be expressed as the product of positive terms in (\ref{pos}).  But then, by (\ref{m-1l}), we see that $\mu^{-1} \lambda$ is expressed as a product of positive terms, contradicting the hypothesis that $\mu^{-1} \lambda \prec 1$.  So this case cannot happen.

Case IV: $a \prec 1 \prec b$.  In (\ref{x}), we see $x$ expressed as the product of negative terms, and so $x \prec 1$.  Now, by (\ref{m-1l2}), we conclude that $y \prec 1$ as otherwise, $\mu^{-1} \lambda$ would be the product of positive terms, contradicting the hypothesis that $\mu^{-1} \lambda \prec 1$. Now, by (\ref{lon2}), $1 \prec \lambda$ because $\lambda$ is expressed as the product of positive terms.  The hypothesis that $\mu^{-1} \lambda \prec 1$ implies, by invariance under left-multiplication, that $\lambda \prec \mu$.  Hence, $1 \prec \mu$, but, from (\ref{mer}) we see $\mu$ as a product of positive elements, a contradiction.  So this case, too, cannot happen.
\end{proof}

\begin{lem}\label{lem:nontrivial}
Let $r \in \mathbf{Q}$.   If  $\pi_1(\partial M)$ is sent to 1 by the quotient map $\pi_1(M) \rightarrow \pi_1(M(r))$, then $M(r) $ is not orderable.
\end{lem}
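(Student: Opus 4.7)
The plan is to argue that the hypothesis forces $\pi_1(M(r))$ to be a quotient of the group $G := \pi_1(M)/\langle\langle \mu, \lambda\rangle\rangle$, and then to compute $G$ explicitly and show that it is finite.  Since any quotient of a finite group is finite, and a finite group is either trivial (not left-orderable by convention) or has torsion (which obstructs left-orderability), this will suffice.  The reduction to $G$ is immediate: if $\pi_1(\partial M)$ is killed in $\pi_1(M(r))$, then both $\mu$ and $\lambda$ lie in the kernel of the quotient map $\pi_1(M) \to \pi_1(M(r))$, so that kernel already contains $\langle\langle \mu, \lambda\rangle\rangle$ and $\pi_1(M(r))$ factors through $G$.

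The main work, which I expect to be the principal obstacle, is establishing that $a$ and $b$ commute in $G$.  For this I would combine (\ref{mer2}) with the imposed relation $\mu = 1$: the former reads $\mu = x^2 b^{-1}$, so $\mu = 1$ gives $x^2 = b$, equivalently $ab^{-2}a = b^3$, and hence the identity $b^{-2}a = a^{-1}b^3$ holds in $G$.  Substituting this identity once into the raw expression $\mu = b^{-2}ab^{-2}ab^{-1}$ collapses $\mu$ to $a^{-1}bab^{-1}$, and so $\mu = 1$ forces $ab = ba$.

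With commutativity in hand the remaining relations become transparent.  The equation $\lambda = a^{-2}b^{-1}a^{-2}b = 1$ degenerates to $a^4 = 1$, while $\mu = 1$ combined with (\ref{mer2}) becomes $a^2 = b^5$.  Abelianising the defining relation (\ref{pi1}) yields $a^{10} = 1$, which together with $a^4 = 1$ forces $a^2 = 1$, and hence $b^5 = 1$.  Therefore $G \cong \mathbf{Z}/2 \oplus \mathbf{Z}/5 \cong \mathbf{Z}/10$, and every quotient of $G$, in particular $\pi_1(M(r))$, is finite, so $M(r)$ is not orderable.
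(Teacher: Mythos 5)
Your proposal is correct, and its overall skeleton matches the paper's: both reduce the problem to the quotient $G = \pi_1(M)/\langle\langle \mu,\lambda\rangle\rangle$ (using that $\mu,\lambda$ generate $\pi_1(\partial M)$) and conclude by showing $G$ is finite. Where you genuinely diverge is in how $G$ is computed. The paper works with the auxiliary elements $x=b^{-2}a$ and $y=ba^2$, imports the relations (\ref{mer2}), (\ref{lon2}), (\ref{m-1l}) together with the rewritten group relation (\ref{pos}), and reduces the resulting presentation by Tietze transformations to $\left\langle x \mid x^{10}=1\right\rangle$. You instead observe that the single relation $\mu=1$, equivalently $ab^{-2}a=b^{3}$, already forces $ab=ba$; your one-step substitution is valid, since $b^{-2}a=a^{-1}b^{3}$ gives $1=\mu=(a^{-1}b^{3})(b^{-2}a)b^{-1}=a^{-1}bab^{-1}$. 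After that, $\lambda=1$ and the abelianization of (\ref{pi1}) immediately yield $a^{4}=1$ and $a^{10}=1$, hence $a^{2}=1$ and $b^{5}=1$. This is cleaner and more elementary, avoiding the $x,y$ bookkeeping entirely. One small imprecision: your argument only shows that $G$ is a quotient of $\mathbf{Z}/2\mathbf{Z}\oplus\mathbf{Z}/5\mathbf{Z}$, i.e.\ $|G|\le 10$, not the asserted isomorphism $G\cong\mathbf{Z}/10\mathbf{Z}$ (for that one would also need a lower bound, e.g.\ via a surjection coming from homology); but finiteness is all the lemma requires, so this does not affect the validity of the proof.
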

\begin{proof}
If the subgroup $\pi_1(\partial M)$ of $\pi_1(M)$ is sent to 1 by the quotient map, then that map factors as: $\pi_1(M) \rightarrow \left\langle \pi_1(M) \vert \mu = 1, \lambda = 1 \right\rangle \rightarrow \pi_1(M(r))$.  Let us examine the group $G = \left\langle \pi_1(M) \vert \mu = 1, \lambda = 1 \right\rangle$. By (\ref{mer2}), (\ref{lon2}), and (\ref{m-1l}), we see that the following relations hold in $G$:
\begin{align}
b &= x^2 \label{relbx} \\
b^2 &= y^2 \label{relby} \\
x &= bay^2 \label{relbayx}
\end{align}
Notice further that (\ref{pos}) can be re-written as
\begin{equation}
x^2ay^2a^{-1}bay^2=1 \nonumber
\end{equation}
This becomes, using (\ref{relbx}) and (\ref{relbayx}):
\begin{equation}
xa^{-1}y^2=1 \nonumber
\end{equation}
Using (\ref{relbx}) and (\ref{relby}), this becomes:
\begin{align*}
xa^{-1}x^4 = 1 \\
a = x^5
\end{align*}
Hence, recalling (\ref{x}) and (\ref{y}), $G$ has the following presentation:
\begin{equation}\nonumber
G = \left\langle a,b,x,y \vert a=x^5, b=x^2, x=b^{-2}a, y=ba^2, b^2=y^2, x=bay^2 \right\rangle
\end{equation}
This can be simplified to:
\begin{align*}
G &= \left\langle x,y \vert y=x^{12}, x^4=y^2, x=x^7y^2 \right\rangle \\
& = \left\langle x \vert  1=x^{20}, 1=x^{30} \right\rangle \\
& = \left\langle x \vert  1=x^{10} \right\rangle \\
& \cong \mathbf{Z}/10\mathbf{Z}
\end{align*}
Therefore, as $\pi_1(M(r))$ is the quotient of a finite group, it is finite as well, and hence not left-orderable (recall that, by convention, the trivial group is considered not left-orderable).
\end{proof}

We are now ready to prove our main result.

\begin{proof}[Proof of Theorem \ref{thm:neg}]
Let $r \in \mathbf{Q} \cap (-\infty, -1)$. By Lemma \ref{lem:nontrivial}, we may assume that $\pi_1(\partial M)$ is not sent to 1 by the quotient map $\pi_1(M) \rightarrow \pi_1(M(r))$.  Furthermore, as $M$ is hyperbolic, it is irreducible and has incompressible torus boundary. Then, since $r \in (-n, -1)$ for some integer $n \geq 1$, Lemma \ref{lem:powers} together with Theorem \ref{thm:cw} tells us that $M(r)$ is not orderable, as required.
\end{proof}

\printbibliography

\end{document}